\definecolor{ccolor}{RGB}{203,96,21}
\definecolor{purplecustom}{RGB}{150, 0, 255}
\newcommand{\re}[1]{#1}
\newcommand{\rev}[1]{#1}
\newcommand{\R}{\mathbb{R}} 
\newcommand{\K}{\mathbb{K}}
\newcommand{\ls}{\mathbb{L}}
\newcommand{\dc}{\mathcal{D}}
\newcommand{\0}{\mathbf{0}}
\newcommand{\psd}{\mathbb{S}}
\newcommand{\dx}{\Delta x} 
\newcommand{\du}{\Delta u}
\newcommand{\tr}{\top}
\newcommand{\kron}{\otimes}
\newcommand{\pc}{\mathcal{P}}
\newcommand{\bpc}{\bar{\mathcal{P}}}
\DeclarePairedDelimiter{\abs}{\lvert}{\rvert}
\DeclarePairedDelimiter{\norm}{\lVert}{\rVert}
\DeclarePairedDelimiter{\diag}{\textrm{diag}(}{)}
\DeclarePairedDelimiterX{\inp}[2]{\langle}{\rangle}{#1, #2}
\DeclareMathOperator*{\find}{find}
\newtheorem{thm}{Theorem}[section]
\newtheorem{prob}{Problem}
\newtheorem{rmk}{Remark} 
\title{\LARGE \bf Data-Driven Superstabilizing Control under Quadratically-Bounded Errors-in-Variables Noise
}
\author{Jared Miller$^{1,3}$, Tianyu Dai$^2$, Mario Sznaier$^3$
\thanks{$^1$ J. Miller is with the Automatic Control Laboratory (IfA) and NCCR Automation, Department of Information Technology and Electrical Engineering (D-ITET), ETH Z\"{u}rich, Physikstrasse 3, 8092, Z\"{u}rich, Switzerland (jarmiller@control.ee.ethz.ch).}
\thanks{$^2$ T. Dai is with The MathWorks, Inc., 1 Apple Hill Drive,
		Natick, MA 01760 USA (tdai@mathworks.com).}
\thanks{$^3$ M. Sznaier is with the Robust Systems Lab,  ECE Department, Northeastern University, Boston, MA 02115. (msznaier@coe.neu.edu).}
\thanks{J. Miller was partially supported by the Swiss National Science Foundation under NCCR Automation, grant agreement 51NF40\_180545.  J. Miller and M. Sznaier were partially supported by NSF grants  CNS--2038493, AFOSR grant FA9550-19-1-0005, and ONR grant N00014-21-1-2431.  
}}
\begin{document}

\maketitle
\thispagestyle{empty}
\pagestyle{empty}


\begin{abstract}
\label{sec:abstract}
The Error-in-Variables model of system identification/control involves nontrivial input and measurement corruption of observed data, resulting in generically nonconvex optimization problems. This paper performs full-state-feedback stabilizing control of all discrete-time linear systems that are consistent with observed data for which the input and measurement noise obey quadratic bounds. 
Instances of such quadratic bounds include elementwise norm  bounds (at each time sample), energy bounds (across the entire signal), and chance constraints arising from (sub)gaussian noise.
Superstabilizing controllers are generated through the solution of a sum-of-squares hierarchy of semidefinite programs. A theorem of alternatives is employed to eliminate the input and measurement noise process, thus improving tractability\re{.} 


\end{abstract}
\section{Introduction}
\label{sec:introduction}

This paper proposes a method to stabilize linear systems corrupted by quadratically bounded \ac{EIV} noise \cite{soderstrom2018errors}. State and input observations $\dc = \{\hat{x}_t, \hat{u}_t\}_{t=\re{1}}^T$ are collected along a trajectory of an \re{a-priori unknown} $n$-state $m$-input linear system \re{$(A_{\re{\star}}, B_{\re{\star}})$}:
\begin{subequations}
\label{eq:model}
\begin{align}
    x_{t+1} &= A_{\re{\star}} x_t + B_{\re{\star}} u_t \label{eq:discrete_dynamics}\\
    \hat{x}_t &= x_t + \dx_t,
    \quad   \hat{u}_t = u_t + \Delta u_t,
    \label{eq:noise_corrupt}
\end{align}
\end{subequations}
in which the collected data in $\dc$ is corrupted by state noise $\dx \in \R^{n T}$ and input noise $\du \in \R^{m(T-1)}$. This paper focuses on the setting where the noise processes $(\dx, \du)$ satisfy a collection of $\re{L}$ convex quadratic constraints as
\begin{align}
\label{eq:quad_bound}
    \forall \ell & \in 1..L: & \norm{F_\ell \dx + G_\ell \du}_2 \leq 1,
\end{align}
in which the known constraint matrices $F_j$ and $G_j$ have compatible dimensions. These quadratic constraints could arise from deterministic knowledge of $(\dx, \du)$, or from high-probability chance-constraints imposed on stochastic processes $(\dx, \du)$ if a robust description is overly conservative \cite{ben2009robust, MARTIN2023gaussian}.
Our goal is to find a gain matrix $K$ such that the full-state-feedback control policy $u_t = K x_t$ can simultaneously stabilize all plants that are consistent with the data in $\dc$ under the noise description in \eqref{eq:quad_bound}.



This paper follows the framework of set-membership direct \ac{DDC}.
In direct \ac{DDC}, a control policy is formed from the collected data and modeling assumptions without first performing system identification (and synthesizing a controller for the identified system) \cite{HOU20133}. Set-membership \ac{DDC} has three main ingredients: the set of data-consistent plants (given a noise model), the set of commonly stabilized plants by a designed controller, and the certificate of set-containment that the stabilized-set contains the consistent-set \cite{martin2023guarantees}. The Matrix S-Lemma can be used to provide proofs of quadratic and worst-case $H_2$ or $H_\infty$ robust control when the noise model is defined by a matrix ellipsoid (quadratic matrix inequality) \cite{waarde2020noisy,bisoffi2021trade}. Farkas-based certificates for polytope-in-polytope membership have been used for robust superstabilization \cite{dai2018data} and positive-stabilization \cite{miller2023ddcpos}. \ac{SOS} certificates of nonnegativity have been employed for stabilization of more general nonlinear systems \cite{dai2020semi, martin2021data}. We note that other non-set-membership \ac{DDC} methods include using Virtual Reference Feedback Tuning \cite{campi2002virtual} and Willem's Fundamental Lemma \cite{coulson2019data}. 



Most \ac{DDC} methods focus solely on process-noise corruption, allowing for the synthesis of controllers through the solution of computationally simple convex programs. This paper continues a line of work in addressing the more challenging setting of \ac{EIV} superstabilization,
proposing a method that can handle the more difficult but realistic \ac{EIV} noise scenario at the cost of more expensive computational requirements. 
Prior work on superstabilization \cite{SZNAIER19963550, polyak2002superstable} of \ac{EIV} systems includes full-state-feedback for polytope-bounded noise \cite{miller2022eiv_short} (e.g. $L_\infty$-bounds) and dynamic output feedback for SISO plants \cite{miller2023eivarx}.
In this work, we will ensure superstabilization under quadratically bounded noise. This will involve developing matrix-\ac{SOS} expressions defined for (multiple) quadratic constraints 
 in \eqref{eq:quad_bound}. Computational complexity is reduced by eliminating $(\dx, \du)$ using a Theorem of Alternatives \cite{boyd2004convex}. 
The concurrent and independently developed similar work in \cite{bisoffi2024controller} performs lossless quadratic stabilization in the presence of a single quadratic-matrix-inequality-representable quadratic constraint, with conservatism added under multiple quadratic constraints. 


This \re{letter} has the following structure: 
Section \ref{sec:preliminaries} reviews preliminaries including notation, quadratically constrained noise, superstabilization, and matrix-\ac{SOS} methods. Section \ref{sec:quad_lp} presents infinite-dimensional \acp{LP} to perform superstabilization under quadratically-bounded \ac{EIV} noise, and applies a Theorem of Alternative to eliminate the noise variables $(\dx, \du)$ from the linear inequalities. Section \ref{sec:quad_truncate} truncates the infinite-dimensional \acp{LP} using the moment-\ac{SOS} hierarchy of \acp{SDP}, and tabulates computational complexity of different approaches. 
\re{Section \ref{sec:extended_super} describes extended superstabilization \cite{polyak2004extended} of data-consistent systems.}
Section \ref{sec:examples} demonstrates our method for \ac{EIV}-tolerant superstabilizing control of example systems. Section \ref{sec:conclusion} concludes the \re{letter}.

\section{Preliminaries}
\label{sec:preliminaries}

\begin{acronym}[WSOS]
\acro{BSA}{Basic Semialgebraic}
\acro{DDC}{Data Driven Control}

\acro{EIV}{Errors-in-Variables}



\acro{LP}{Linear Program}
\acroindefinite{LP}{an}{a}

\acro{LMI}{Linear Matrix Inequality}
\acroplural{LMI}[LMIs]{Linear Matrix Inequalities}
\acroindefinite{LMI}{an}{a}





\acro{PMI}{Polynomial Matrix Inequality}

\acro{PSD}{Positive Semidefinite}



\acro{SDP}{Semidefinite Program}
\acroindefinite{SDP}{an}{a}

\acro{SOC}{Second-Order Cone}
\acroindefinite{SOC}{an}{a}

\acro{SOS}{Sum-of-Squares}
\acroindefinite{SOS}{an}{a}

\acro{WSOS}{Weighted Sum of Squares}

\end{acronym}

\subsection{Notation}

\begin{tabular}{p{0.15\columnwidth}p{0.75 \columnwidth}}
     $a..b$& natural numbers between $a$ and $b$ (inclusive) \\
     $\R_{\geq 0} \ \re{(\R_{>0})}$ & nonnegative \re{(positive)} orthant \\
     $\pi^x$ & projection operator $(x, y) \rightarrow x$ \\
     $A^\tr$ & Transpose of matrix $A$ \\
      $A^+$ & Pseudoinverse of matrix $A$ \\
     $\mathbb{L}^n$ & $n$-dimensional \ac{SOC} \\
     & $\{(x, t) \in \R^n \times \R_{\geq 0} \mid t \geq \norm{x}_2\}$ \\
     $\mathcal{N}(\mathbf{m}, \Sigma)$& Normal distribution with mean $\mathbf{m}$ \\
     & and covariance $\Sigma$ \\
     $\R[x]$ & polynomials in indeterminate $x$ \\
     $\deg(h)$ & degree of polynomial $h \in \R[x]$ \\
     $\psd^r[x]$ & $r \times r$ polynomial-valued-matrices in $x$ \\
     $\Sigma^r[x]$ & \ac{SOS} polynomial-valued-matrices in $x$ \\
     $\inp{A}{B}$ & Matrix pairing $\textrm{Tr}(A^\tr B) = \sum_{ij} A_{ij}B_{ij}$
\end{tabular}

\subsection{Quadratic Noise Bounds}

This subsection briefly highlights instances of quadratic noise bounds in \eqref{eq:quad_bound}.

\subsubsection{Elementwise Norm Constraints}
\label{sec:elem_norm}
A deterministic noise bound could impose that $\forall t \in 1..T: \norm{\dx_t}_2 \leq \epsilon_x$, and  $\forall t \in 1..T-1: \norm{\du_t}_2 \leq \epsilon_u$. 
Elementwise norm constraints can arise when $\dx$ and $\du$ are i.i.d. normally distributed random variables $(\dx_t  \sim \mathcal{N}(\0_n, \Sigma_x), \du_t \sim \mathcal{N}(\0_m, \Sigma_u)$. The statistics $\dx_t^T \Sigma_x^{-1} \dx_t$ and $\du_t^T \Sigma_u^{-1} \dx_t$  are each $\chi^2$-distributed with $n$ and $m$ degrees of freedom respectively.

Let $\varepsilon(\delta; f)$ refer to the quantile statistic of a $\chi^2$ distribution with $f$ degrees of freedom with respect to a probability $\delta \in [0, 1]$ (and random variable $p$):
\begin{equation}
    \chi^2_f(p \leq \varepsilon(\delta; f)) = 1-\delta.
\end{equation}

For a choice of probabilities $\delta_x, \delta_u \in [0, 1]$, the joint probability of $\dx, \du$ lying within the set described by
\begin{subequations}
\label{eq:chisq}
\begin{align}
    \forall t & \in 1..T: & &  \norm{\Sigma_x^{-1/2} \dx_t}_2 & \leq \sqrt{\varepsilon(\delta_x; n)} 
    \label{eq:chisq_x}\\
    \forall t & \in 1..T-1: & &  \norm{\Sigma_u^{-1/2} \du_t}_2 & \leq \sqrt{\varepsilon(\delta_u; m)} \label{eq:chisq_u}
\end{align}
\end{subequations}
is $P_{\textrm{joint}} = (1-\delta_x)^T(1-\delta_u)^{T-1}$. A controller $u = K x$ certifiably stabilizes all plants consistent with \eqref{eq:model} and \eqref{eq:chisq} will be able to stabilize the true system with probability $P_{\textrm{joint}}$. Similar elementwise quadratic chance-constraints arise when $\dx, \du$ are drawn from i.i.d. subgaussian distributions \cite{hsu2012subgaussian}.

\subsubsection{Energy Constraints}
The standard quadratic expression used in a Linear Quadratic Regulator of
\begin{equation}
    \textstyle J = \dx_T^\tr Q_T \dx_T + \left(\sum_{t=1}^{T-1} \dx_t^\tr Q \dx_t + \du_t^\tr R \du_t\right)
\end{equation}
is compatible with the structure of \eqref{eq:quad_bound} if \re{the cost matrices} $(Q, R, Q_T)$ are all \ac{PSD}\re{.} 

\subsection{Superstabilizing Control}

Let $W \in \R^{f \times n}$ be a matrix with full column rank such that $\{x \mid \norm{W x}_\infty \leq 1\}$ is a compact set. 
A discrete-time linear system $x_{t+1} = A x_t$ is \textit{$W$-superstable} if $\norm{W x}_\infty$ is a polyhedral Lyapunov function:
\begin{equation}
    \norm{W A W^{\re{+}}}_\infty < 1 \quad \text{($L_\infty$ Operator Norm).}  \label{eq:ext_superstability_norm}
\end{equation}
The system is \textit{superstable} if it is $W$-superstable with $W = I_n$. Any superstable system $x_{t+1} = A x_t$  obeys the the decay bound of $\norm{x_t}_\infty \leq \norm{A}_\infty^t \norm{x_0}_\infty$. This decay bound generalizes to $W$-superstability as in $\norm{W x_t}_\infty  \leq \norm{W A W^{\re{+}}}_\infty^t \norm{W x_0}_\infty$.
$W$-superstabilization of the system $x_{t+1} = A x_t + B u_t$ proceeds by choosing a gain $K \in \R^{m \times n}$ with $u_t = K x_t$ such that $\norm{\re{W}(A + BK)\re{W^+}}_\infty < 1$. The $W$-superstabilization problem of minimizing the decay bound \re{(for fixed $W$)} is a finite-dimensional \ac{LP}: 
\begin{subequations}
\label{eq:W_superstable}
\begin{align}
    \lambda^* = & \inf_{M, K}\quad \lambda: \qquad \textstyle \sum_{j=1}^f M_{ij} < \lambda & & \forall i \in 1..f \label{eq:superstable_strict} \\
    &\textstyle  \abs{[W(A +  B K)W^+]_{ij}}\leq M_{ij}& & \forall i,j \in 1..f.\label{eq:superstable_nonstrict} \\
    & M \in \R^{n \times n}, \  K \in \R^{m \times n}.
\end{align}
\end{subequations}


\subsection{Sum-of-Squares Matrices Background}
\label{sec:sos}
This paper will formulate worst-case-superstabilization problems as infinite-dimensional \acp{LP}, which in turn will be truncated using the moment-\ac{SOS} hierarchy \cite{lasserre2009moments}.
For any symmetric polynomial-valued-matrix $p \in \psd^r[x]$ of size $r \times r$ with indeterminate $x \in \R^n$, the degree of $p$ is the maximum polynomial degree of any one of its entries $(\deg p = \max_{ij} \deg p_{ij}).$ A sufficient condition for $p(x) \succeq 0$ over $\R^n$ is if $p(x)$ is an \textit{\ac{SOS}-matrix}: there exists a polynomial vector $v \in (\R[x])^c$ and a \ac{PSD} \textit{Gram} matrix $Q \in \psd^{rc \times rc}$ such that (Lemma 1 of \cite{scherer2006matrix})
\begin{align}
    p(x) = (v(x) \otimes I_r)^\tr Q (v(x) \otimes I_r). \label{eq:sos_mat}
\end{align}
The set of \ac{SOS} matrices with representation in \eqref{eq:sos_mat} is $\Sigma^r[x]$, and the subset of \ac{SOS} matrices with maximal degree $\leq 2k$ is $\Sigma^r[x]_{\leq 2k}$.
A constraint region defined by a locus of \ac{PSD} containments can be constructed from $\{g_j \in \psd^{r_j}[x]\}_{j=1}^{N_c}$ as
\begin{align}
    \K = \{x \in \R^n \mid \forall j \in 1..N_c: \ g_j(x) \succeq 0\}. \label{eq:set_K}
\end{align}
The matrix  $p \in \psd^r[x]$ satisfies \iac{PMI} over the region in \eqref{eq:set_K} if $\forall x \in \K: \ p(x) \succeq 0. $
A sufficient condition for this \ac{PMI} to hold is that there exist \ac{SOS}-matrices $\{\sigma_j(x)\}_{j=0}^{N_c}$ \re{and an $\epsilon > 0$} such that
\begin{subequations}
\label{eq:scherer_psatz}
\begin{align}
    p(x) &= \sigma_0(x) + \textstyle \sum_{i=j}^{N_c} \inp{g_j(x)}{\sigma_j(x)} \re{+ \epsilon I}\label{eq:scherer_psatz_expr}\\
    & \sigma_0 \in \Sigma^r[x], \ \forall j \in 1..N_c: \  \sigma_j \in \Sigma^{r_j}[x] \label{eq:scherer_multipliers}, \epsilon > 0.
\end{align}
\end{subequations}
The set of matrices in $\psd^r[x]$ possessing a representation as in \eqref{eq:scherer_psatz_expr} (existence of $\{\sigma_j\}$) will be written as the \ac{WSOS} set $\Sigma^r[\K]$. The degree-$2k$-bounded \ac{WSOS} cone $\Sigma^r[\K]_{\leq 2k}$ is the set of matrices with representation in \eqref{eq:scherer_psatz} such that $\deg \sigma_0 \leq 2k$ and $\forall j \in 1..N_c: \ \deg \inp{g_j(x)}{\sigma_j(x)} \leq 2k$. The representation of $\K$ by polynomial matrices in \eqref{eq:set_K} is \textit{Archimedean} if there exists an $R > 0$ such that the scalar polynomial $p_R(x) = R - \norm{x}_2^2$ satisfies $p_R \in \Sigma^r[\K]$. If the representation in \eqref{eq:set_K}  is Archimedean, then every $p \in \psd^r[x]$ such that $p(x)$ is Positive Definite over $K$ satisfies $(p(x) - \epsilon I_r ) \in \Sigma^r[K]$ for some $\epsilon > 0$ (Corollary 1 of \cite{scherer2006matrix}).
Testing membership of $p \in \Sigma^r[x]_{\leq 2k}$ through \eqref{eq:scherer_psatz} can be accomplished by solving \iac{LMI} in the Gram matrices for $\{\sigma_{j}\}$ under coefficient-matching equality constraints. 
A common choice of polynomial vector $v(x)$ used to represent the \ac{SOS} matrices in \eqref{eq:sos_mat} is the vector of all $\binom{n+k}{k}$ monomials of degree $\leq k$. The maximal-size \ac{PSD} constraint involved in \eqref{eq:scherer_psatz} under the monomial choice for $v(x)$ is $r \binom{n+k}{k}$.

\section{Quadratically-Bounded Linear Programs}
\label{sec:quad_lp}

This section presents an infinite-dimensional \ac{LP} which $W$-superstabilizes  the class of data-consistent plants.

\subsection{Consistency Set}

Define $h^0\re{(A, B)}$ as the following residual:
  \begin{align}
        h_t^0 \re{(A, B)} &= \hat{x}_{t+1} - A \hat{x}_t - B \hat{u}_t & \forall t \in 1..T-1. \label{eq:aff_weight}
    \end{align}
The quantity $\re{h^0_t(A_\star, B_\star)}$  will be equal to zero at all $t \in 1..T-1$ if there is no noise present in the system.
The joint consistency set $\bpc(A, B, \dx, \du) \subset \R^{n \times n} \times \R^{n \times m} \times \R^{nT} \times \R^{m(T-1)}$ of plants and noise values consistent with the dynamics \eqref{eq:discrete_dynamics} and noise properties \eqref{eq:quad_bound} is
\begin{align}
    \bar{\pc}:= \ \begin{Bmatrix*}[l]\dx_{t+1}= A\dx_{t} + B \du_t + h_t^0  & \forall t \in 1..T-1 \\
       \norm{F_\ell \dx + G_\ell \du}_2 \leq 1  & \forall \ell  \in 1..L\\
     \end{Bmatrix*}, \label{eq:pcbar}
\end{align}
\re{in which the $(A, B)$ dependence of $h^0_t$ is implied in notation.}
The formulation \eqref{eq:pcbar} involves bilinear terms $A \dx$ and $B \du$, which could lead to nonconvex or possibly disconnected sets $\bpc$ \cite{cerone1993feasible}.
By defining a matrix $\Xi_A \in \R^{n(T-1) \times nT}$ with
\begin{align}
    \Xi &= [(I_{T-1} \otimes A), \0_{n(T-1) \times n}] + [\0_{n(T-1) \times n}, -I_{n(T-1)}] ,\nonumber 
\end{align}
and defining $n_\ell$ to be the column dimension of $F_\ell$ and $G_\ell$ for each $\ell$ ($F_\ell \in \R^{n_\ell \times nT}, \ G_\ell \in \R^{n_\ell \times m(T-1)}$\re{)},
the consistency set in \eqref{eq:pcbar} can be equivalently expressed as
\begin{align}
    \bar{\pc}:= \ \begin{Bmatrix*}[l]\Xi \dx + (I_{T-1} \otimes B) \du + h^{\re{0}} = 0  \\
       (F_\ell \dx + G_\ell \du, 1)  \in \ls^{n_\ell}  & \forall \ell  \in 1..L\\
     \end{Bmatrix*}. \label{eq:pcbar_cone}
\end{align}


The consistency set $\pc(A, B) \subset \R^{n \times n} \times \R^{n \times m}$ of plants compatible with the data in $\dc$ is the following projection,
\begin{align}
    \pc(A, B) = \pi^{A,B} \bpc(A, B, \dx, \du),\label{eq:pc}
\end{align}
in which there exists bounded noise $(\dx, \du)$ following \eqref{eq:quad_bound} such that $\dc$ could have been observed.
Our task is as follows:
\begin{prob}
\label{prob:ess}
    Find a matrix $K \in \R^{m \times n}$ such that full-state-feedback controller $u_t = K x_t$ ensures that $A + BK$ is $W$-superstable for all $(A, B) \in \pc$.
\end{prob}


\subsection{Full Linear Program}

Let $\eta > 0$ be chosen as a margin to ensure stability.

\begin{thm}
$W$-Superstability through a common $K$ (for Problem \ref{prob:ess}) can be ensured by solving the following infinite-dimensional \ac{LP} in terms of a matrix $K \in \R^{m \times n}$ and a matrix-valued function $M(A, B): \pc \rightarrow \R^{f \times f}$ (Equation (12) of \cite{miller2022eivlong}):
\begin{subequations}
\label{eq:super_full} 
\begin{align}
    \find_{M, K} \ \textrm{\re{s.t.}} \ & \forall (A, B, \dx, \du) \in \re{\bpc}:  \label{eq:super_full_quant}\\
    & \quad \forall i = 1..f: \label{eq:super_full_row}\\
    & \qquad 1 - \eta - \textstyle\sum_{j=1}^n M_{ij}(A, B) \geq 0 \nonumber  \\        
    & \quad \forall i = 1..f, \ j = 1..f: \label{eq:super_full_element}\\
    & \qquad  M_{ij}(A, B) -(W\left(A+BK\right)W^+)_{ij} \geq 0 \nonumber\\
    & \qquad  M_{ij}(A, B) +(W\left(A+BK\right)W^+)_{ij} \geq 0\nonumber \\
    & M(A, B): \pc \rightarrow \R^{f \times f}, \ K \in \R^{m \times n}.\label{eq:M_function}
\end{align}
\end{subequations}
\end{thm}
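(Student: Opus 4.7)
The proof plan is a direct verification that the constraints of the infinite-dimensional LP \eqref{eq:super_full} encode the $L_\infty$-operator-norm characterization of $W$-superstability \eqref{eq:ext_superstability_norm}, promoted from the nominal finite-dimensional LP \eqref{eq:W_superstable} to a robust/parametric version in which the majorizer $M$ becomes a function of $(A,B)$.

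First, I would unfold the definition of $\pc$. By \eqref{eq:pc}, $\pc$ is the projection of $\bpc$ onto its $(A,B)$-coordinates, so for every candidate plant $(A,B) \in \pc$ there exists at least one noise pair $(\dx, \du)$ such that $(A, B, \dx, \du) \in \bpc$. The constraints \eqref{eq:super_full_row}--\eqref{eq:super_full_element} only involve $(A,B)$ through $M(A,B)$ and $W(A+BK)W^+$, so the universal quantifier in \eqref{eq:super_full_quant} over $\bpc$ in fact enforces those inequalities at every $(A,B) \in \pc$.

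Second, I would combine the two absolute-value inequalities in \eqref{eq:super_full_element} to obtain $\abs{[W(A+BK)W^+]_{ij}} \leq M_{ij}(A,B)$ for all $i,j$. Summing over $j$ and invoking \eqref{eq:super_full_row} yields, for each row $i$,
$$ \textstyle\sum_{j} \bigl|[W(A+BK)W^+]_{ij}\bigr| \;\leq\; \textstyle\sum_{j} M_{ij}(A,B) \;\leq\; 1 - \eta. $$
Taking the maximum over $i$ identifies the left-hand side with the maximum absolute row-sum characterization of the $L_\infty$ induced norm, giving
$$ \norm{W(A+BK)W^+}_\infty \;\leq\; 1 - \eta \;<\; 1, $$
which is precisely the $W$-superstability certificate \eqref{eq:ext_superstability_norm} for $A+BK$. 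Since $(A,B) \in \pc$ was arbitrary, the common gain $K$ $W$-superstabilizes every data-consistent plant, resolving Problem~\ref{prob:ess}.

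There is no substantive technical obstacle in the statement itself; it is essentially a bookkeeping lift of the row-sum LP \eqref{eq:W_superstable} to a parametric setting, with the margin $\eta > 0$ supplying the strict inequality required by \eqref{eq:ext_superstability_norm}. The genuine difficulty, to be addressed in the subsequent sections, lies not in proving this theorem but in making the program \eqref{eq:super_full} tractable: the $\forall$-quantifier over $\bpc$ has to be eliminated (via the Theorem of Alternatives to dispense with $(\dx,\du)$), and the resulting \ac{PMI} parameterized by $(A,B) \in \pc$ must be truncated to a hierarchy of \acp{SDP} through the matrix-\ac{SOS} machinery of Section~\ref{sec:sos}.
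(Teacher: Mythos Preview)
Your proposal is correct and follows the same route as the paper: the paper's proof is a one-sentence observation that the $\forall (A,B,\dx,\du)\in\bpc$ quantifier enforces the $W$-superstabilizing row-sum LP \eqref{eq:W_superstable} at every $(A,B)\in\pc$, and you have simply unpacked this in full detail (projection from $\bpc$ to $\pc$, combining the two sign inequalities into the absolute value, summing rows, and identifying the $L_\infty$ operator norm).
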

\begin{proof}
The $W$-superstabilizing linear system in \eqref{eq:W_superstable} is fulfilled for each $(A, B) \in \pc$, as expressed by the $\forall (A, B, \dx, \du) \in \bpc$ quantification in \eqref{eq:super_full_quant}.
\end{proof}

\begin{rmk}
    \re{Under} the assumption that $\bpc$ is compact, the function $M$ can be selected to be continuous (Lemma 3.1 of \cite{miller2022eivlong}) and even polynomial (Lemma 3.2 of \cite{miller2022eivlong}).
    The same type of linear system structure for superstabilization in \eqref{eq:super_full_row}-\eqref{eq:M_function} is used (with $W=I_n$) as in \cite{miller2022eivlong}, but the set $\bpc$ in \cite{miller2022eivlong} is defined by polytopic-bounded noise rather than  quadratically-bounded noise  \eqref{eq:pcbar}. 
    \end{rmk}

\subsection{Robustified Linear Program}

Problem \eqref{eq:super_full} involves \re{\iac{LP}} with $2f^2 + f$ infinite-dimensional \re{linear} constraints, each posed over the $n(n+T) + m(n+T-1)$ variables $(A, B, \dx, \du)$.
Reducing the number of variables involved in any quantification will simplify resultant \ac{SOS} truncations and increase computational tractability.
This subsection will use a Theorem of Alternatives in order to eliminate the noise variables $(\dx, \du)$ from Program \eqref{eq:super_full}. The maximum number of variables appearing in any infinite-dimensional linear inequality constraint will subsequently drop from $n(n+T) + m(n+T-1)$ down to $n(n+m)$,  \re{which no longer depends on the number of samples $T$.}

Let $q(A, B): \pc \rightarrow \R$ be a function that is independent of $(\dx, \du)$, such as any one of the left-hand-side elements for  constraints \eqref{eq:super_full_row}-\eqref{eq:super_full_element}. The following pair of problems are strong alternatives (exactly one is feasible):
\begin{subequations}
\label{eq:strong_altern}
\begin{align}
    \forall (A, B, \dx, \du) \in \bpc: & & q(A, B) &\geq 0 \label{eq:q_feas} \\
    \exists  (A, B, \dx, \du) \in \bpc: & & q(A, B) &< 0, \label{eq:q_infeas}
\end{align}
\end{subequations}
because if $q\geq 0$ for all $(A, B, \dx, \du) \in \bpc$ \eqref{eq:q_feas}, then there cannot be a point in $\bpc$ where $q(A, B, \dx, \du) < 0$ \eqref{eq:q_infeas}.
We can define the following dual variable functions
\begin{subequations}
\label{eq:dual_must}
    \begin{align}
        \mu: \ & \pc \rightarrow \R^{n(T-1)} \\
        \label{eq:soc_cond}  (s_\ell, \tau_\ell): \ & \pc \rightarrow \ls^{n_\ell} & \forall \ell \in 1..L.
    \end{align}
\end{subequations}

The $(A, B)$ dependence of $(\mu, s, \tau)$ will be omitted to simplify notation.
A weighted sum $\Phi(A, B, \dx, \du; \mu, s, \tau)$ may be constructed from $q$ and  $(\mu, s, \tau)$ from \eqref{eq:dual_must}:
\begin{align}
    \Phi &= -q + \mu^\tr(\Xi \dx + (I_{T-1} \otimes B) \du + h^0) \nonumber  \\ &\textstyle+ \sum_{\ell=1}^L \left(\re{\tau_\ell\ - } s_\ell^\tr(F_\ell \dx + G_\ell \du) \right). \label{eq:phi_orig}
\end{align}
The terms in $\Phi$ may be rearranged into 
\begin{align}
    \Phi &= \textstyle \left(-q + \mu^\tr h^0\re{+} \sum_{\ell=1}^L \tau_\ell\right) \nonumber \\
    &\textstyle + \left(\Xi^\tr \mu \re{-} \sum_{\ell=1}^L F_\ell^\tr s_\ell\right)^\tr \dx \\
    &+ \textstyle \left((I_{T-1} \kron B^\tr) \mu \re{-} \sum_{\ell=1}^L G_\ell^\tr s_\ell\right)^\tr \du. \nonumber
\end{align}
Expressing the $(\dx, \du)$-constant terms of $\Phi$ as $Q$ with
\begin{equation}
    \textstyle Q = -q + \mu^\tr h^0\re{+} \sum_{\ell=1}^L \tau_\ell,
\end{equation}
the supremal value of $\Phi$ w.r.t. $\dx, \du$ has a value of
\begin{equation}
    \sup_{\dx, \du} \Phi = \begin{cases} 
    Q & \textrm{if} \  \re{0=}\Xi^\tr \mu \re{-}\sum_{\ell=1}^L F_\ell^\tr s_\ell \\
    & \textrm{if} \ \re{0=}(I_{T-1} \kron B^\tr) \mu \re{-} \sum_{\ell=1}^L G_\ell^\tr s_\ell \\
    \infty & \textrm{otherwise}.\end{cases}
\end{equation}

\begin{thm}
    Problem \eqref{eq:q_feas} will have the same feasibility (or infeasibility) status as the following program:
    \begin{subequations}
    \label{eq:q_altern}
        \begin{align}
            \find_{\mu, s, \tau} \ \ \textrm{\re{s.t.}} \ & \forall (A, B) \in \pc: \\
            & \qquad q \re{-} \textstyle \sum_{\ell=1}^L \tau_\ell - \mu^\tr h^0 \geq 0 \\
            & \textstyle \qquad \Xi^\tr \mu \re{-} \sum_{\ell=1}^L F_\ell^\tr s_\ell = 0  \\
            & \qquad \textstyle (I_{T-1} \kron B^\tr) \mu \re{-} \sum_{\ell=1}^L G_\ell^\tr s_\ell =0 \\
            & \textrm{$(\mu, s, \tau)$ from \eqref{eq:dual_must}.}
        \end{align}
    \end{subequations}
\end{thm}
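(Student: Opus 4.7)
The plan is to prove the equivalence by combining weak duality, made explicit through the $\Phi$ construction already assembled above, with strong duality for the conic feasibility problem in $(\dx, \du)$ parametrized by $(A, B) \in \pc$.

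For the ``dual feasible $\Rightarrow$ \eqref{eq:q_feas} holds'' direction I would directly reap the reward of the displayed computation preceding the theorem. Fixing any $(A, B, \dx, \du) \in \bpc$ and any dual-feasible $(\mu, s, \tau)$, I would (i) use the affine equality defining $\bpc$ to zero out $\mu^\tr(\Xi \dx + (I_{T-1}\kron B)\du + h^0)$; (ii) apply self-duality of the \ac{SOC} (i.e.\ Cauchy--Schwarz) to the pair $(F_\ell \dx + G_\ell \du, 1) \in \ls^{n_\ell}$ and $(s_\ell, \tau_\ell) \in \ls^{n_\ell}$ to obtain $\tau_\ell - s_\ell^\tr(F_\ell\dx + G_\ell\du) \geq 0$ for each $\ell$; (iii) invoke the two dual equality constraints to cancel the affine-in-$\dx$ and affine-in-$\du$ coefficients of $\Phi$; and (iv) combine with the first dual inequality $q - \sum_\ell \tau_\ell - \mu^\tr h^0 \geq 0$ to conclude $q(A, B) \geq 0$.

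For the converse I would argue pointwise over $(A, B) \in \pc$. Since $q$ is independent of $(\dx, \du)$, the statement $q(A, B) \geq 0$ is equivalent to infeasibility of the strict conic system ``$(A, B, \dx, \du) \in \bpc$ and $q(A, B) < 0$'' in the unknowns $(\dx, \du)$; under a Slater-type regularity condition on $\bpc$, the conic theorem of alternatives (equivalently, strong duality for a mixed equality/SOC feasibility problem, cf.\ \cite{boyd2004convex}) then furnishes multipliers $(\mu(A, B), s(A, B), \tau(A, B))$ realizing the constraints of \eqref{eq:q_altern}. The principal obstacle I anticipate is exactly this strong-duality half: one must verify Slater strict feasibility of $\bpc$ uniformly as $(A, B)$ ranges over $\pc$ (so no duality gap opens at any plant), and then organize the pointwise multipliers into a function of $(A, B)$ regular enough for the continuous/polynomial selection invoked in the remark following the previous theorem. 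Once these technicalities are absorbed, the equivalence falls out of the explicit $\sup_{\dx,\du}\Phi$ computation displayed immediately before the theorem statement.
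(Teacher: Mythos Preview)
Your proposal is correct and follows essentially the same route as the paper: both appeal to the convex-duality theorem of alternatives from \cite{boyd2004convex}, with the paper's one-line proof simply citing that result and noting that all constraints describing $\bpc$ are affine in $(\dx,\du)$, while you spell out the weak- and strong-duality halves explicitly via the $\Phi$ computation. The Slater-type regularity you flag as an obstacle is precisely the constraint qualification the paper is implicitly absorbing into the citation and the affinity remark.
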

\begin{proof}
    This relationship holds using the convex-duality based Theorem of Alternatives from Section 5.8 of \cite{boyd2004convex}, given that all description expressions in $q$ and $\bpc$ are affine in the uncertain terms $(\dx, \du)$.
\end{proof}

\begin{thm}
    If $\pc$ is compact, then the multipliers $(\mu, s, \tau)$ can be chosen to be polynomial functions of $(A, B)$.
\end{thm}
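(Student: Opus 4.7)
The plan mirrors the continuous-then-polynomial strategy used to establish polynomial feasibility for $M(A,B)$ in Lemma 3.2 of \cite{miller2022eivlong}. For each $(A,B) \in \pc$, let $\Lambda(A,B)$ denote the set of multipliers $(\mu, s, \tau)$ that are feasible in \eqref{eq:q_altern}. Nonemptiness of $\Lambda(A,B)$ at every $(A,B) \in \pc$ is exactly the conclusion of the previous Theorem of Alternatives; $\Lambda(A,B)$ is closed and convex because all of its defining constraints are affine or conic in the multipliers; and the set-valued map $(A,B) \mapsto \Lambda(A,B)$ has closed graph on the compact $\pc$ because its defining data $q, h^0, \Xi, F_\ell, G_\ell, I_{T-1}\kron B$ depend polynomially on $(A,B)$. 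Combined with lower hemicontinuity (which follows from the linearity of the constraints in the multipliers), Michael's continuous selection theorem then yields a continuous selection $(\mu_c, s_c, \tau_c): \pc \to \Lambda$, which is uniformly bounded by compactness of $\pc$.

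Next, the Stone--Weierstrass theorem on the compact set $\pc$ supplies, for any $\varepsilon > 0$, polynomial functions $(\mu_p, s_p, \tau_p)$ of $(A,B)$ that are uniformly $\varepsilon$-close to $(\mu_c, s_c, \tau_c)$ on $\pc$. The scalar inequality $q - \sum_\ell \tau_\ell - \mu^\tr h^0 \geq 0$ is maintained after approximation by exploiting the $\eta$-margin already present in \eqref{eq:super_full_row} and slightly rescaling $\tau_p$ upward, and each conic constraint $(s_\ell, \tau_\ell) \in \ls^{n_\ell}$ is restored by a further upward polynomial adjustment of $\tau_\ell$.

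The main obstacle will be preserving the two \emph{equality} constraints of \eqref{eq:q_altern} exactly, since Stone--Weierstrass delivers only approximate satisfaction and uniform slack cannot be used to close an equality. The plan is to resolve this by parameterizing the multipliers so that the equalities hold structurally: given a polynomial $\mu_p$, compute $s_p$ from the stacked linear system $[F^\tr; G^\tr]\, s = [\Xi^\tr \mu_p;\; (I_{T-1} \kron B^\tr)\mu_p]$. The coefficient matrix on the left is \emph{constant} in $(A,B)$ while the right-hand side is polynomial in $(A,B)$, so applying a constant Moore--Penrose pseudoinverse of $[F^\tr; G^\tr]$ returns a polynomial $s_p$. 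Consistency of this linear system at every $(A,B) \in \pc$ is the one remaining point to be enforced; since $\mu_c$ has an associated $s_c$, the right-hand side evaluated at $\mu_c$ lies in $\operatorname{range}([F^\tr; G^\tr])$, and one restricts the polynomial ansatz for $\mu_p$ to the polynomial-parameterized affine variety cutting out this range condition, which remains dense in the continuous selections satisfying the equalities. This produces a polynomial triple $(\mu_p, s_p, \tau_p)$ that is exactly feasible in \eqref{eq:q_altern} at every $(A,B) \in \pc$, as required.
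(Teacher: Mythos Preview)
Your overall strategy---continuous selection via Michael's theorem followed by Stone--Weierstrass approximation---is the route the paper points to (its own proof is omitted, deferring to Theorem~3.3 of \cite{miller2023analysis} and Theorems~4.4--4.5 of \cite{miller2022eiv_short}, which carry out precisely this continuous-then-polynomial program for conic multipliers). The handling of the scalar inequality and the \ac{SOC} constraints can be made to work essentially as you sketch, using compactness of $\pc$ to obtain uniform slack.

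The genuine gap is the treatment of the two equality constraints in \eqref{eq:q_altern}. Your pseudoinverse construction produces a polynomial $s_p$ that actually satisfies the equalities only when the right-hand side $[\Xi(A)^\tr \mu_p;\,(I_{T-1}\kron B^\tr)\mu_p]$ lies in the (constant) range of the stacked matrix $[F_1^\tr\cdots F_L^\tr;\;G_1^\tr\cdots G_L^\tr]$ for \emph{every} $(A,B)\in\pc$. You propose to force this by ``restricting the polynomial ansatz for $\mu_p$ to the polynomial-parameterized affine variety cutting out this range condition,'' but this is where the argument stops being a proof. Writing $P$ for the projector onto the cokernel, the condition $P[\Xi(A)^\tr \mu_p;\,(I_{T-1}\kron B^\tr)\mu_p]=0$ on $\pc$ is a family of linear constraints on $\mu_p$ whose coefficients depend polynomially on $(A,B)$, and you have not established that polynomials satisfying it are dense (in sup norm on $\pc$) among continuous functions satisfying it. That is a statement about polynomial sections of the kernel of an $(A,B)$-varying linear map; without either a full-row-rank hypothesis on $[F^\tr;G^\tr]$ (which makes the range condition vacuous) or an explicit polynomial parameterization of the admissible $\mu$, the step does not close.

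Two smaller issues. First, lower hemicontinuity of $(A,B)\mapsto\Lambda(A,B)$ does not follow merely from ``linearity of the constraints in the multipliers'' (the map $t\mapsto\{x:tx=0\}$ is a counterexample); one typically needs a Slater-type interior point that varies continuously. Second, raising $\tau_\ell$ to restore the cone constraint \emph{tightens} the scalar inequality $q-\sum_\ell\tau_\ell-\mu^\tr h^0\geq 0$, so the slack you need cannot come from the $\eta$-margin in \eqref{eq:super_full_row} directly; it must come from strict feasibility of the continuous selection itself, which in turn requires the interior-point argument you have not yet supplied.
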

\begin{proof}
    The proof is omitted due to page limitations, but follows from Theorem 3.3 of \cite{miller2023analysis} (generalizing Theorems 4.4 and 4.5 of \cite{miller2022eiv_short} to the conically constrained case).
\end{proof}

\begin{rmk}
    In the case of probabilistic noise set from \eqref{eq:chisq} with $L = 2T - 1$, the multipliers $(s, \tau)$ can be partitioned into $\forall t \in 1..T: \ (s^x_t, \tau^x_t)$ for \eqref{eq:chisq_x} and $\forall t \in 1..T-1: \ (s^u_t, \tau^u_t)$ for \eqref{eq:chisq_u}.
    The certificate \eqref{eq:q_altern} can then be expressed as 
        \begin{subequations}
    \label{eq:q_altern_chisq}
        \begin{align}
            \find_{\mu, s, \tau} \  \textrm{\re{s.t.}} \ & \forall (A, B) \in \pc: \\
            & \qquad q \re{-} \textstyle \sum_{\ell=1}^T \sqrt{\varepsilon(\delta_x; n)} \tau^x_\ell  + \textstyle \sum_{\ell=1}^{T-1} \sqrt{\varepsilon(\delta_u; m)} \tau^u_\ell \nonumber\\
            &\qquad \qquad - \mu^\tr h^0 \geq 0 \\
            & \qquad\Sigma^{-1/2}_x s_1^x = \re{A^\tr \mu_1} \\
    &\forall t=2..T-1: \  \Sigma^{-1/2}_x s_t^x = \re{A^\tr \mu_t - } \mu_{t-1}  & & \\
    &\qquad\Sigma^{-1/2}_x s_T^x =  \re{-}\mu_{T-1} \\
            & \forall t \in 1..T-1: \  \Sigma_u^{-1/2} s^u_t= \re{B^\tr \mu_t}  \\
            & \textrm{$(\mu, s, \tau)$ from \eqref{eq:dual_must}.}
        \end{align}
    \end{subequations}
\end{rmk}
\section{Truncated Sum-of-Squares Programs}
\label{sec:quad_truncate}

This section uses the moment-\ac{SOS} hierarchy of \acp{SDP} to discretize the infinite-dimensional \ac{LP} in \eqref{eq:super_full} into finite-dimensional convex optimization problems that are more amenable to computation. This discretization will be performed by \ac{SOS}-matrix truncations.

\subsection{Quadratically-Bounded Truncations}


Program \eqref{eq:super_full} has $2f^2 + f$ infinite-dimensional \ac{LP} constraints posed over $(A, B, \dx, \du)$. This subsection applies a degree $2k$ \ac{SOS} tightening to the constraints in program \eqref{eq:super_full}. In each case, the matrix $M$ is restricted to a polynomial $M \in (\R[A, B]_\re{{\leq 2k}})^{n \times n}.$
The remainder of this section will analyze the computational complexity of an \ac{SOS} tightening on a single infinite-dimensional constraint from \eqref{eq:super_full} (represented as $q(A, B) \geq 0$ from \eqref{eq:q_feas}). Complexity will be compared according to the \ac{PSD} Gram matrix of maximal size.

\subsubsection{Full Program}

The full program applies a scalar \ac{WSOS} constraint $q \in \Sigma^1[\bpc]$  over $(A, B, \dx, \du)$. The size of the maximal Gram matrix  for each \eqref{eq:q_feas} restriction is 
\begin{equation}
   \textstyle  p_F = \binom{n(n+T) + m(n+T-1)+k}{k}.\label{eq:size_full}
\end{equation}


\subsubsection{Alternatives (Dense)}



The \ac{SOC} constraint in \eqref{eq:soc_cond} can also be expressed by a PSD constraint \cite{alizadeh2003second},
\begin{align}
    (s_\ell, \tau_\ell) \in \ls^{n_\ell} & \Leftrightarrow  \begin{bmatrix}
        \tau_\ell  & s_{\ell}^\re{\tr} \\ s_{\ell} & \tau_\ell \re{I_{n_\ell}}
    \end{bmatrix} \in \psd^{n_\ell+1}_+. \label{eq:soc_sdp_arrow}
\end{align}

In order to prove convergence of the Alternative truncations as the degree $k \rightarrow \infty$, we must assume that there exists a known Archimedean set $\Pi(A, B)$ such that $\Pi \supseteq \pc$. Such a $\Pi$ might be known from norm or Lipschitz bounds on $(A, B)$, or other similar knowledge on reasonable plant behavior. If $\Pi$ is a-priori unknown, then we will use the \ac{WSOS} symbol $\Sigma^r[\Pi]$ to refer to the \ac{SOS} set $\Sigma^r[x]$. 
At the degree-$k$ truncation, the multipliers from \eqref{eq:dual_must} can be chosen:
\vspace{-0.5cm}
\begin{subequations} 
\label{eq:multipliers_arrow}
\begin{align}
    \mu &\in (\R[\Pi]_{\leq 2k - 1})^{nT} \\
    s_\ell & \in (\R[\Pi]_{\leq 2k})^{n_\ell} & & \forall \ell \in 1..L \\
    \tau_\ell & \in \R[\Pi]_{\leq 2k} & & \forall \ell \in 1..L.
\end{align}
These multipliers are required to satisfy: 
\begin{align}
& \begin{bmatrix}
        \tau_\ell(A, B)  & s_{\ell}(A, B)^\re{\tr} \\ s_{\ell}(A, B) & \tau_\ell(A, B)\re{I_n}
    \end{bmatrix}  \in \Sigma^{n_\ell+1}[\Pi]_{\leq 2k} &  \forall \ell \in 1..L. \label{eq:soc_sos_large}
\end{align}
\end{subequations}

The matrix \ac{WSOS} constraint in \eqref{eq:soc_sos_large} has \iac{PSD} Gram matrix of maximal size 
\begin{equation}
    \textstyle p_A^\ell = (n^\ell+1)\binom{n(n+m)+k}{k}.\label{eq:size_altern}
\end{equation}
The `dense' nomenclature for this approach will refer to the imposition that \eqref{eq:soc_sos_large} is \ac{WSOS} over a matrix of size $n_\ell+1$ for each $\ell=1..L.$

\subsubsection{Alternatives (Sparse)}

The \ac{SOC} constraint in \eqref{eq:soc_sdp_arrow} can be decomposed into $2 \times 2$ blocks as in \cite{alizadeh2003second}:
\begin{align}
\nonumber
       (s_\ell, \tau_\ell) \in \ls^{n_\ell} & \Leftrightarrow  \exists z_{i\ell}: \ \begin{bmatrix}
        \tau_\ell & s_{i \ell} \\ s_{i \ell} & z_{i\ell} 
    \end{bmatrix} \in \psd^2_+, \quad \tau_\ell = \textstyle \sum_{i=1}^n z_{i \ell}.
\end{align}

The multipliers in \eqref{eq:dual_must} can be restricted to
\begin{subequations}
\label{eq:multipliers_sparse}
\begin{align}
    \mu &\in (\R[\Pi]_{\leq 2k - 1})^{nT} \\
    s_\ell, \re{\ z_\ell} & \in (\R[\Pi]_{\leq 2k})^{n_\ell} & \forall \ell \in 1..L
\end{align}
subject to the constraint 
\begin{align}
\begin{bmatrix}
       \textstyle  \re{\sum_{i=1}^{n_\ell} z_{i \ell}}(A, B) & s_{i \ell}(A, B) \\ s_{i \ell}(A, B) & z_{i \ell}(A, B) 
    \end{bmatrix} \in \Sigma^{2}[\Pi]_{\re{\leq 2k}} & & \forall \ell=1..L. \label{eq:soc_sos_small}
\end{align}
\end{subequations}
The Gram matrices from \eqref{eq:soc_sos_small} have a reduced size of 
\begin{equation}
   \textstyle  p_B = 2 \binom{n(n+m)+k}{k}. \label{eq:size_block}
\end{equation}

\begin{rmk}
The cone constraints \eqref{eq:soc_sos_large} and \eqref{eq:soc_sos_small} are not necessarily equivalent at finite degree $2k$, although they will describe the same set as $k \rightarrow \infty$ given that $(s, \tau)$ are optimization variables. Refer to \cite{papp2013semidefinite} for more details on the relationship between matrix \ac{SOC} cone representations.
\end{rmk}

\subsection{Computational Complexity}

Computational complexity of the Full, Alternatives (Dense), and Alternatives (Decomposed) schemes will be judged comparing the sizes and multiplicities of the largest \ac{PSD} constraint in any $\bpc$-nonnegativity constraint. As a reminder, the  superstabilizing program in \eqref{eq:super_full} has $2n^2+n$ such $\bpc$-nonnegativity linear inequality constraints.

For each $\bpc$-nonnegativity constraint, the Full program requires 1 block of maximal size $p_F$ from \eqref{eq:size_full}. The Alternatives programs require $L+1$ \ac{PSD} blocks each, in which the Dense program has block sizes of $p_A^\ell$ from \eqref{eq:size_altern} for $\ell=1..L$ (and a scalar block of size $\binom{n(n+m)+k}{k}$). The sparse program has all $L$ blocks of size $p_B$ from \eqref{eq:size_block} along with a similar scalar block of size $\binom{n(n+m)+k}{k}$.
Table \ref{tab:psd_size} reports the size of the largest \ac{PSD} matrix constraint for the three approaches under $n=2, m=2, k=2, T = 12$. The considered quadratically-bounding constraints all have $L=2$, such as in the elementwise-norm  $\norm{\dx_t}_2 \leq \epsilon_x$ and $\norm{\du_t}_2 \leq \epsilon_u$ from Section \ref{sec:elem_norm}.
We first note that the per-iteration complexity of solving an \ac{SDP} using an interior point method (with \ac{PSD} size $N$ having $M$ affine constraints) is $O(N^3 M + N^2 M^2)$ \cite{alizadeh1995interior}. 
In the context of Table \ref{tab:psd_size}, the size of $N = p_F = 1540$ is intractably large for current interior-point methods. The `\re{Multiplicity}' scaling for Dense and Sparse causes computational complexity to grow linearly, while the increasing `Size' parameter causes polynomial growth in scaling.
\begin{table}[h]
    \centering
        \caption{Size of \ac{PSD} Variables for $\bpc$-nonnegativity}
    \label{tab:psd_size}
    \begin{tabular}{ |l|l|c| }
 \hline
  & Size & \re{Multiplicity} \\ \hline
 Full & {$p_F = 1540$} & {1}  \\ \hline
 Dense & {$p_A^\ell = 135$} & {46} \\ \hline
Sparse & {$p_B = 90$} & {46} \\ \hline
    \end{tabular}
\end{table}
\section{Extended Superstability}
\label{sec:extended_super}
\re{
The superstabilization method considered in Sections \ref{sec:quad_lp} and \ref{sec:quad_truncate} rely on the use of a previously given and fixed $W$ matrix. The framework of Extended Superstability \cite{polyak2004extended} allows for $W$ to 
be chosen as a positive-definite $n$-dimensional diagonal matrix, in which this diagonal-restricted $W$ may be searched over in optimization and is not fixed in advance. The resultant common Lyapunov function $\norm{W x}_\infty$ therefore has hyper-rectangular sublevel sets.
Letting $v \in \R^n$, $v > 0$ be a positive vector with matrix $W = \diag{v}^{-1},$ and $x_{t+1} = A x_t$ be a dynamical system, the $W$-superstabilization condition $\norm{W A W^+}_\infty < 1$ may be expressed as 
\begin{subequations}
\begin{align}
    \forall i = 1..n, j = 1..n& & [A \ \textrm{diag}(v)]_{ij} \leq M_{ij}\\  
    \forall i = 1..n& &  \textstyle \sum_{j=1}^n M_{ij} < v_i. 
\end{align}
\end{subequations}

The quadratically-bounded \ac{EIV} extended superstabilization task involves the following optimization problem with variables $ M(A, B): \pc \rightarrow \R^{n \times n}, S \in \R^{m \times n}, v \in \R^n:$
\begin{subequations}
\label{eq:ext_superstable}
\begin{align}
\find_{M, \ S, \ v} \ \textrm{s.t. } \  & \forall (A, B) \in \pc: \\
& \quad \forall i \in 1..n: \label{eq:ext_superstable_strict}\\
&\quad \textstyle \qquad v_i > 0,  \   \sum_{j=1}^n M_{ij}(A, B) < v_i \nonumber  \\
&\quad  \forall i \in 1..n, \ j \in 1..n:  \label{eq:ext_superstable_nonstrict} \\
    &\quad \qquad \textstyle \abs{A_{ij}v_j + \sum_{\ell=1}^{\rev{m}} B_{i\ell}S_{\ell j}} \leq M_{ij}(A, B) & & \nonumber\\
    &M: \pc \rightarrow \R^{n \times n}, \, S \in \R^{m \times n}, v \in \R^n.
\end{align}
\end{subequations}
If Program \eqref{eq:ext_superstable} is feasible, then the controller $K$  recovered by $K = S W = S (\diag{v})^{-1}$ is guaranteed to $W$-superstabilize all systems in $\pc$. All three methods (Full, Dense, Sparse) for \ac{WSOS} truncation can be employed to formulate order-$k$ versions of Program \eqref{eq:ext_superstable} (w.r.t. a Psatz in $(A, B, \dx, \du)$ for Full or $(A, B)$ for Alternatives) by choosing  $M \in (\R[A, B]_{\leq 2k})^{n \times n}$. 

}
\section{Numerical Example}

\label{sec:examples}

Code to generate the following experiment is available at  \url{https://github.com/Jarmill/eiv_quad}. The \acp{SDP} deriving from the \ac{SOS} programs were synthesized through \texttt{JuMP} \cite{Lubin2023jump} and solved using Mosek 10.1 \cite{mosek92}. 


\subsection{White Noise}

This example involves a 2-state 2-input ground-truth plant:
\begin{align}
    A_{\star} = \begin{bmatrix}
        0.6863  &  0.3968 \\
    \re{-}0.3456  &  1.0388
    \end{bmatrix}, \ B_{\star} = \begin{bmatrix}
        0.4170 & 0 \\ 0.7203 & \re{-}0.3023
    \end{bmatrix}.\label{eq:sys_white}
\end{align}

Data $\dc$ is collected from an execution of \eqref{eq:sys_white} over a time-horizon of $T=\re{13}$. In the data collection, the \ac{EIV} noise $\dx,\du$ are i.i.d. distributed according to $\dx_t \re{\sim} \mathcal{N}(\0, 0.0\re{3}^2 I_2), \du_t \re{\sim} \mathcal{N}(\0, 0.0\re{25}^2 I_2)$. It is desired to create \re{an extended} superstabilizing controller that will succeed in regulating the ground-truth system with joint probability $P{_\textrm{joint}} = 95\%.$ The per-noise probability is chosen as $\delta_x = \delta_u = (0.95)^{1/(2T-1)}=0.9981$. The $(\delta_x, \delta_u)$ chance constraint is modeled as $\norm{\dx_t}_2 \leq 0.0\re{3} \epsilon(\delta_x; 2) = 0.\re{1056}$ and $\norm{\du_t}_2 \leq 0.0\re{25} \epsilon(\delta_u; 2)=0.\re{08796}$ for each $t$.

\re{Extended} superstabilizing control is performed to minimize \re{$\lambda$ such that $\forall i: \lambda\geq v_i$}.
The $k=1$ truncation of \re{\eqref{eq:ext_superstable}} using the \re{dense Alternatives method} \eqref{eq:multipliers_arrow} ($p_A = 27$) returns
\begin{align}
K_{\textrm{dense}} &= \re{\begin{bmatrix}
 -0.9000 &   -0.8807\\
  0.1564  &  0.3679
    \end{bmatrix} }& & 
    \re{v = \begin{bmatrix}
         0.5519\\
 1.4481
    \end{bmatrix}.}
    \label{eq:controller_white}
\end{align}


\re{The Full \eqref{eq:super_full}, Dense \eqref{eq:multipliers_arrow}, and Sparse \eqref{eq:multipliers_sparse} superstabilization $(W=I)$ programs are all infeasible for $k=1$.}
Attempting execution of the $k=2$ tightening for the Full program ($p_F = \binom{64}{2} = 2016$) results in out-of-memory errors in Mosek.
For this specific example, the algorithms from Theorems 1 and 2 of \cite{bisoffi2024controller} both fail to find a common quadratically stabilizing controller.

\subsection{Monte Carlo Test}








\rev{This second example involves a Monte Carlo test for robust stabilization of 300 randomly generated 2-state 2-input ground-truth plants. Elements of the plant matrices $(A, B)$ are each i.i.d. drawn from a unit normal distribution. The noise $\Delta x_t$ and $\Delta u_t$ are drawn i.i.d. uniformly from unit $L_2$-balls of radius $\epsilon_x = 0.225$ and $\epsilon_u = 0.1$ respectively for a time horizon of $T=14$. At $k=1$, dense and sparse superstable \ac{SOS} restrictions stabilized 48 and 42 systems respectively. Dense and sparse extended superstabilization $k=1$ \ac{SOS} restrictions stabilized 71 and 62 systems respectively. Theorem 1 of \cite{bisoffi2024controller} was infeasible at each instance. Theorem 2 of \cite{bisoffi2024controller} stabilized 61 systems, with an overlap of 31 stabilized systems with the $k=1$ dense superstability, and 38 systems with the $k=1$ dense extended superstability.
 }




\section{Conclusion}

\label{sec:conclusion}

This paper presented a solution approach for data-driven  superstabilization in the quadratically-bounded \ac{EIV} setting. The $W$-superstabilization problem was formulated as an infinite-dimensional linear program, and was discretized using \ac{SOS}-matrix methods. A Theorem of Alternative was used to eliminate the $(\dx, \du)$ noise terms, resulting in matrix \ac{SOS} constraints with lower computational complexity.

Future work involves 
\re{reducing conservatism of \ac{EIV}-aware control methods and incorporating streaming data for \ac{EIV}-tolerant model predictive control.}







\bibliographystyle{IEEEtran}
\bibliography{references.bib}

\end{document}